\newtheorem{theorem}{Theorem}
\newtheorem{lemma}[theorem]{Lemma}
\newtheorem{corollary}[theorem]{Corollary}
\theoremstyle{definition}
\newtheorem{example}[theorem]{Example}
\DeclareMathOperator\rev{rev}
\newcommand{\m}[1]{\begin{bmatrix}#1\end{bmatrix}}
\DeclareMathOperator\diag{diag}
\newcommand{\abs}[1]{\left\vert#1\right\vert}
\newcommand{\norm}[1]{\left\Vert#1\right\Vert}
\newcommand{\col}[1]{{#1}^{\text{col}}}
\newcommand{\row}[1]{{#1}^{\text{row}}}
\newcommand{\val}[2]{\left. #1 \right|_{#2}}
\title{A duality relation for matrix pencils with application to linearizations}
\author{Federico Poloni\footnote{Technische Universit\"at Berlin, Institut f\"ur Mathematik. Strasse des 17. Juni 136, 10623 Berlin. \texttt{poloni@math.tu-berlin.de}. Supported by the Alexander von Humboldt foundation.}}
\begin{document}
\maketitle

\begin{abstract}
The aim of this paper is twofold. First, we introduce a new class of linearizations, based on the generalization of a construction used in polynomial algebra to find the zeros of a system of (scalar) polynomial equations. We show that one specific linearization in this class, which is constructed naturally from the QR factorization of the matrix obtained by stacking the coefficients of $A(x)$, has good conditioning and stability properties.

Moreover, while analyzing this class, we introduce a general technique to derive new linearizations from existing ones. This technique generalizes some ad-hoc arguments used in dealing with the existing linearization classes, and can hopefully be used to derive a simpler and more general theory of linearizations. This technique relates linearizations to \emph{pencil arithmetic}, a technique used in solving matrix equations that allows to extend some algebraic operations from matrix to matrix pencils.
\end{abstract}

\section{Introduction}
The most used technique for finding the eigenvalues of a matrix polynomial is converting it to a linear problem. Given a matrix polynomial $A(x)\colonequals \sum_{i=0}^d A_d x^d \in \mathbb{C}^{n,n}[x]$, a degree-1 matrix polynomial (also called \emph{matrix pencil}) $L(x)\in \mathbb{C}^{dn,dn}[x]$ is a \emph{linearization} if there are $E(x),F(x) \in \mathbb{C}^{dn,dn}[x]$ such that
\begin{equation}\label{deflin}
 L(x)=E(x)\diag(A(x),I_{(d-1)n})F(x),
\end{equation}
and $\det E(x),\det F(x)$ are nonzero constants. Linearizations have the same eigenvalues and Jordan structure as the starting matrix polynomial  \cite{GohLR-book}. Several different methods to construct such a pencil exist; we recall here the more common ones.
\begin{description}
 \item[Companion forms] \cite{GohLR-book} The well-known \emph{companion matrix} of a scalar polynomial generalizes easily to matrix polynomials. An example of this construction is $C_0-C_1 x$, where
\begin{align}\label{compform}
 C_0\colonequals& \m{A_0\\ & I \\ & & I\\ & & & \ddots \\ & &  & & I}, & C_1\colonequals \m{-A_1 & I_{n} &  \\ -A_2& 0 & I_n & &\\ \vdots& & \ddots & \ddots  & \\ -A_{d-1} & & & 0 & I_n\\ -A_d& & & & 0}.
\end{align}
In literature several variants of this form appear: the polynomial may be reversed, so that $A_d$ (instead of $A_0$) appears alone on a coefficient, and the $A_i$ can appear in the last block column instead of the first, or in the top or bottom block row.
\item[Companion forms in different polynomial bases] \cite{AmiCR09} The companion form can be adapted using in its construction different (orthogonal) polynomial families instead of the usual monomial basis, yielding for instance pencils in the form $C_0-\widetilde{C_1} x$, where
\[
 \widetilde{C_1}\colonequals\begin{bmatrix}
                  -A_1 + \beta_0 A_0 & \gamma_1 I\\
                  -A_2 + \alpha_0 A_0 & \beta_1 I & \gamma_2 I\\
                  -A_3 & \alpha_1 I& \beta_2 I& \gamma_3 I\\
                  -A_4 & & \alpha_2 I& \beta_3 I& \ddots \\
                  \vdots & & & \ddots & \ddots & \gamma_{d-1} I\\
                  -A_{d-1} & & & & \alpha_{d-2}I & \beta_{d-1}I
                 \end{bmatrix}
\]
has a tridiagonal pattern related to the associated three-term recurrence.
\item[Vector spaces of linearizations] \cite{MacMMM06} Another generalization of \eqref{compform} is considering the vector space $\mathbb{L}_2$ of pencils $L_0-L_1 x$ that satisfy
\begin{equation}\label{rowshifted}
 \begin{bmatrix}
  I_{dn}\\0_{d\times dn}
 \end{bmatrix}
  L_0 -
 \begin{bmatrix}
  0_{d\times dn}\\I_{dn}
 \end{bmatrix}
  L_1=
  \begin{bmatrix}
   A_0\\A_1\\\vdots \\ A_d
  \end{bmatrix}
  \left(v^* \otimes I_n\right)
\end{equation}
for some $v\in\mathbb{C}^{d}$. The operation on the left-hand side is called \emph{row-shifted sum} in \cite{MacMMM06}. A second vector space $\mathbb{L}_1$ is constructed with a similar relation, generalizing another companion form in which the coefficients of the polynomial appear on a row. Pencils in the intersection $\mathbb{DL}\colonequals \mathbb{L}_1 \cap \mathbb{L}_2$ have many interesting properties; for any regular matrix polynomial, most of them are linearizations, and there is a simple test to tell when and to characterize their condition number.
\item[Fiedler pencils] \cite{AntV04}
Define the matrices $F_i\colonequals\diag(I_{(i-1)n},G_i,I_{n(d-i-1)})$, for $i=1,2,\dots,d$, where
\[
 G_i\colonequals \begin{bmatrix}A_i & I_n\\ I_n & 0_{n\times n}\end{bmatrix}.
\]
For each permutation $\sigma$ of $(1,2,\dots,d)$, the pencil $C_0-\prod_{i=1}^d F_{\sigma(i)} x$ is a linearization; in particular, the two permutations $\sigma(i)=i$ and $\sigma(d+1-i)$ yield companion forms, so this construction can again be regarded a generalization, though in a different sense, of the companion form. All the pencils obtained in this way can be laid out explicitly as a simple combination of blocks $A_i$, $I$ and zeros. Several additional generalizations of Fiedler pencils exist \cite{AntV04,VolA11}.
\end{description}
Not all linearizations preserve the Jordan structure of the eigenvalue $\infty$ \cite{GohKL88,LanP06}; to ensure that (\emph{strong linearization property}), we have to require additionally that $\rev L(x)$ is a linearization for $\rev A(x)$, where the operator $\rev$ is defined by
\[
 \rev \sum_{i=0}^d M_i x^i \colonequals \sum_{i=0}^d M_{d-i} x^i.
\]
Similarly, not all linearizations extend seamlessly to singular polynomials. Usually, the minimal indices of the singular part of the matrix polynomials are modified; therefore, in order to reconstruct the singular structure of $A(x)$, we need to analyze how it is related to that of $L(x)$ \cite{DetDM09}.

Other additional properties are often desirable in a linearization. For instance, when the starting polynomial is structured (i.e., symplectic, or even, or Hermitian), we would like our linearization to have the same property \cite{MacMMM06-goodVibrations}. Moreover, in addition to the eigenvalues, it is useful to be able to recover the eigenvectors of $A(x)$ with little additional work (preferably none at all) from those of $L(x)$ \cite{MacMMM06,BueDD11}. Last but not least, the condition number of an eigenvalue in $L(x)$ can be higher than the corresponding one in $A(x)$ \cite{HigMT06}, leading to instability in the computation; it would be good to be able to control this condition number growth.

The aim of this paper is twofold. First, we introduce a new class of linearizations, based on the generalization of a construction used in polynomial algebra to find the zeros of a system of (scalar) polynomial equations. We show that one specific linearization in this class, which is constructed naturally from the QR factorization of the matrix obtained by stacking the coefficients of $A(x)$, has good conditioning and stability properties.

Moreover, while analyzing this class, we introduce a general technique to derive new linearizations from existing ones. This technique generalizes some ad-hoc arguments used in dealing with the existing linearization classes, and can hopefully be used to derive a simpler and more general theory of linearizations. This technique relates linearizations to \emph{pencil arithmetic} \cite{BenB06}, a technique used in solving matrix equations that allows to extend some algebraic operations from matrix to matrix pencils.

\section{Basics}
For each $\mathbb{C}$-vector space $M$ (and in particular we are interested in $M=\mathbb{C}^n$ or $M=\mathbb{C}^{n,n}$) and for each degree $d\in \mathbb{N}$ we define
\[
M_d[\lambda,\mu]\colonequals \{a_d \lambda^d + a_{d-1} \lambda^{d-1}\mu +\dots + a_0 \mu^d  : a_i \in M \},
\]
the space of degree-$d$ homogeneous matrix polynomials in two unknowns with coefficients in $M$. We identify a single-variable matrix polynomials) $A(x)$ of degree $d$ with its \emph{projectivization} $A(\lambda/\mu)\mu^d$. This makes it easier to deal with the eigenvalue $x=\infty$ (which becomes $\lambda=1,\mu=0$); on the other hand, a purely projective theory of linearizations
seems to be elusive, as a projective analogue of \eqref{deflin} and of the strong linearization property does not exist, up to our knowledge.

Two matrix polynomials $P(x),Q(x)\in\mathbb{C}^{n,n}[x]$, are called \emph{equivalent} if there exist $E(x),F(x)\in\mathbb{C}^{n,n}[x]$ such that $E(x)P(x)F(x)=Q(x)$ and $\det E(x)$, $\det P(x)$ are nonzero constant. They are called \emph{strongly equivalent} if one can find constant matrices $E(x)\equiv E\in\mathbb{C}^{n,n}$, $F(x)\equiv F\in\mathbb{C}^{n,n}$ realizing the equivalence. Notice that both are equivalence relations. Any pencil equivalent to a linearization is a linearization; any pencil strongly equivalent to a strong linearization is a strong linearization.

A matrix polynomial is called \emph{regular}, or \emph{nonsingular}, if $\det A(x)$ is not identically zero. Nonsingularity of a matrix polynomial $A(x)=\sum_{x=0}^d A_i x^i$ implies that
\[
 \col{A}\colonequals\m{A_0\\A_1\\ \vdots \\ A_d} \text{ and } \row{A}\colonequals\m{A_0 & A_1 & \dotsm & A_d }
\]
have respectively full column and row rank; the viceversa does not hold, though \cite{GohLR-book}.

\begin{theorem}\label{thm:companion}
Let a matrix polynomial $A(\lambda,\mu)\colonequals \sum_{i=0}^d A_i \lambda^i \mu^{d-i} \in \mathbb{C}^{n,n}_d[\lambda,\mu]$ be given, and let $C_0,C_1$ be as in \eqref{compform}.
Then,
\begin{enumerate}
 \item the pencil $C(\lambda,\mu)\colonequals\lambda C_1 -\mu C_0$, known as \emph{second companion form}, is a strong linearization of $A(\lambda,\mu)$.
 \item Let $x$ and $y^*$ be right and left eigenvectors of $A(\lambda,\mu)$ with eigenvalue $(\lambda,\mu)$, i.e., $A(\lambda,\mu)x=0$ and $y^* A(\lambda,\mu)=0$. Then, the vectors
\begin{align*}
 \hat{x}=&\m{x\\ (A_1+\frac{\mu}{\lambda} A_0)x\\(A_2+\frac{\mu}{\lambda}A_1+\frac{\mu^2}{\lambda^2} A_0)x\\ \vdots\\
 (A_{d-1} + \frac{\mu}{\lambda} A_{d-2} + \dots + \frac{\mu^{d-1}}{\lambda^{d-1}} A_0)x
 }\\
 =&\m{x\\ -(\frac{\lambda}{\mu}A_2+\frac{\lambda^2}{\mu^2}A_3 + \dotso + \frac{\lambda^{d-1}}{\mu^{d-1}}A_d)x \\ \vdots \\ -(\frac{\lambda}{\mu}A_{d-1}+\frac{\lambda^2}{\mu^2}A_d)x \\ -\frac{\lambda}{\mu}A_dx},\\
 \hat{y}^*=&\m{\mu^{d-1}y^* & \mu^{d-2}\lambda y^* & \dotsm & \lambda^{d-1} y^*}
\end{align*}
are right and left eigenvectors of $\lambda C_1 -\mu C_0$, respectively. Notice that the first expression for $\hat{x}$ is defined for $\lambda\neq 0$, the second for $\mu\neq 0$, and they coincide when they are both defined.
\end{enumerate}
\end{theorem}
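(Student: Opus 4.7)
For part (1), my plan is to exhibit explicit unimodular matrices $E(x), F(x)$ establishing the equivalence $E(x)(xC_1 - C_0)F(x) = \diag(A(x), I_{(d-1)n})$ in the dehomogenized setting $\mu = 1$. The reduction is a two-stage process of block elementary operations. First, block column operations add the polynomial multiples $a_i(x) = -xA_{i+1} - x^2 A_{i+2} - \cdots - x^{d-i} A_d$ of block column $i+1$ into block column $1$, for $i = 1, \ldots, d-1$; this is the unique choice that clears block rows $2$ through $d$ of the first column, and a telescoping computation identifies the remaining $(1,1)$ entry as $-A(x)$. Second, block row operations add $x^{k-1}$ times block row $k$ into block row $1$ for $k = 2, \ldots, d$; these cascade along the superdiagonal until absorbed by the final $-I$ block, leaving a block-diagonal matrix whose lower-right factor is upper-bidiagonal with constant determinant, hence further reducible to $I_{(d-1)n}$ by trivial operations. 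For the strong linearization claim, one repeats a structurally parallel reduction on $\rev L(x) = C_1 - xC_0$ against $\rev A(x)$, with the roles of rows and columns essentially interchanged.

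For part (2), I verify directly that $L(\lambda,\mu)\hat{x} = 0$ and $\hat{y}^* L(\lambda,\mu) = 0$. Block row $1$ of $L\hat{x}$ yields $(-\lambda A_1 - \mu A_0)x + \lambda\hat{x}_2 = 0$ from the first expression for $\hat{x}_2$. The middle block rows yield $-\lambda A_k x - \mu\hat{x}_k + \lambda\hat{x}_{k+1}$, which collapses to zero by the recursive definition of the $\hat{x}_k$. Block row $d$ uses the second expression $\hat{x}_d = -(\lambda/\mu)A_d x$ to give direct cancellation; the equivalence of the two expressions for $\hat{x}$ is itself a consequence of the eigenvalue relation, obtained by dividing $A(\lambda,\mu)x = 0$ by appropriate powers of $\lambda$ and $\mu$. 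The left eigenvector calculation is dual: the first block column of $\hat{y}^* L$ accumulates to exactly $-y^*A(\lambda,\mu) = 0$, while the remaining columns telescope pairwise between the $\lambda C_1$ and $\mu C_0$ contributions, since consecutive entries of $\hat{y}^*$ differ by a factor of $\lambda/\mu$.

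The main obstacle is the bookkeeping required to treat the strong linearization and the point at infinity cleanly. A purely homogeneous analogue of the reduction above fails: the would-be column multipliers $a_i$ acquire a factor of $\mu$ in the denominator, and the residual lower-right block carries a determinant proportional to $\mu^{(d-1)n}$ rather than a nonzero constant. This is why one is forced to run two separate but parallel reductions — one for $L(x)$, another for $\rev L(x)$ — rather than a single reduction in $(\lambda,\mu)$. Once this choice is made, both reductions and both eigenvector verifications are mechanical, with the only nonroutine ingredient being the identification of the correct polynomial coefficients $a_i(x)$ in the first column-clearing step.
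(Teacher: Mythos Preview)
The paper does not prove Theorem~\ref{thm:companion}; it is stated in the ``Basics'' section as a classical result (the companion form is attributed to \cite{GohLR-book}) and used thereafter without proof. So there is no paper proof to compare against.

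Your argument is correct and is essentially the standard textbook reduction. The column multipliers $a_i(x) = -xA_{i+1} - \cdots - x^{d-i}A_d$ are indeed the unique choice that clears block rows $2,\dots,d$ of the first block column of $xC_1-C_0$, and the telescoping identification of the $(1,1)$ block with $-A(x)$ is right. The subsequent row operations with multipliers $x^{k-1}$ clear the rest of the first block row as you describe, leaving an upper-bidiagonal remainder with constant determinant. Your eigenvector verifications in part~(2) are also correct: block row $d$ of $L\hat{x}$ does require either the second expression for $\hat{x}_d$ or, equivalently, an appeal to $A(\lambda,\mu)x=0$ divided by $\lambda^{d-1}$, and the left-eigenvector column telescoping works exactly as stated.

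Your observation about the obstacle to a purely homogeneous reduction is apt and in fact echoes a remark the paper makes elsewhere: a projective analogue of the linearization definition \eqref{deflin} does not appear to exist, which is why the strong linearization property is phrased as two separate affine conditions (on $L$ and on $\rev L$) rather than one projective one. Running two parallel reductions is therefore not a workaround but the expected shape of the argument.
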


\begin{lemma}[Bases completion lemma]\label{bcl}
Let $W\in\mathbb{C}^{m,m+n}$ with full row rank and $A\in\mathbb{C}^{m+n,n}$ with full row rank such that $WA=0$. Then, one can find $V\in\mathbb{C}^{n,n+m}$, $B\in\mathbb{C}^{n+m,m}$ such that
\[
 \m{V\\W}\m{A & B}=I_{m+n}.
\]
\end{lemma}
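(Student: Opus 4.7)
My plan is to expand the desired block identity into its four component equations and to construct $V$ and $B$ that satisfy all of them simultaneously, exploiting the hypothesis $WA = 0$ and the rank conditions. Writing out the product, the requirement $\begin{bmatrix}V\\W\end{bmatrix}\begin{bmatrix}A & B\end{bmatrix} = I_{m+n}$ becomes the four equations $VA = I_n$, $VB = 0$, $WA = 0$, $WB = I_m$; the third is given, so I need to produce a $B$ that is a right inverse of $W$ and a $V$ that is simultaneously a left inverse of $A$ and annihilates the chosen $B$ (here I am reading ``full row rank'' of $A$ as ``full column rank,'' which is the only sensible interpretation since $A$ has more rows than columns).

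The cleanest route is to choose $B$ first and then recover $V$ as part of an inverse. Since $W$ is $m \times (m+n)$ of full row rank, a right inverse $B \in \mathbb{C}^{m+n,m}$ with $WB = I_m$ exists (e.g., $B = W^{\ast}(WW^{\ast})^{-1}$). I then show that $\begin{bmatrix}A & B\end{bmatrix}$ is invertible: the columns of $A$ are linearly independent and annihilated by $W$, hence form a basis of $\ker W$, which has dimension $(m+n) - m = n$; the columns of $B$ lie outside $\ker W$ since $W$ maps them to the standard basis of $\mathbb{C}^m$; combining these two observations gives $m + n$ linearly independent columns. Once invertibility is established, define $\begin{bmatrix}V \\ W'\end{bmatrix} := \begin{bmatrix}A & B\end{bmatrix}^{-1}$; the bottom-row equations $W'A = 0$ and $W'B = I_m$ are automatic.

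The only remaining check is that the $W'$ produced by this construction actually equals the original $W$. For this I note that $(W - W')A = 0 - 0 = 0$ and $(W - W')B = I_m - I_m = 0$, so $(W - W')\begin{bmatrix}A & B\end{bmatrix} = 0$, and since $\begin{bmatrix}A & B\end{bmatrix}$ is invertible this forces $W = W'$. The top-row equations $VA = I_n$ and $VB = 0$ come for free from the definition of the inverse, and the construction is complete.

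The proof is really a counting argument plus a uniqueness observation, so I do not expect a serious obstacle. The one place where care is needed is the identification $W = W'$, which hinges on the fact that the bottom block of an inverse is uniquely determined by the requirements $W'A = 0$, $W'B = I_m$ once $\begin{bmatrix}A & B\end{bmatrix}$ is invertible; this is what lets me avoid a more computational recipe such as $V = V_0(I - BW)$ for a chosen left inverse $V_0$ of $A$, which would also work but is less illuminating.
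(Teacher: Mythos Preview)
Your argument is correct. The one slightly compressed step is the linear-independence claim for the columns of $\m{A & B}$: saying that the columns of $B$ ``lie outside $\ker W$'' is not by itself enough, but what you really use is that $WB=I_m$ forces any relation $Ac+Bd=0$ to have $d=W(Ac+Bd)=0$ and then $c=0$ by the column rank of $A$. With that spelled out, everything goes through, including the nice uniqueness step $W'=W$.

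Your route differs from the paper's. The paper picks \emph{arbitrary} completions $\hat V,\hat B$ so that $\m{\hat V\\W}$ and $\m{A & \hat B}$ are invertible, observes that the product is block upper triangular because $WA=0$, and then right- and left-multiplies by block triangular matrices to normalize it to the identity. You instead choose $B$ from the outset to satisfy $WB=I_m$ (a right inverse of $W$), prove $\m{A & B}$ is invertible directly via the kernel/rank count, and then read $V$ off from the inverse, using a short uniqueness argument to identify the bottom block with the given $W$. The paper's version is more symmetric in $V$ and $B$ and gives an explicit formula for both corrections; yours avoids any correction step at the cost of the extra verification $W'=W$. Both are short; yours is arguably the more conceptual of the two.
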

\begin{proof}
 Let us complete the columns of $A$ and $W^*$ to a basis of $\mathbb{C}^{m+n}$, i.e., choose $\hat{B}\in\mathbb{C}^{n+m,m}$ and $\hat{V}\in\mathbb{C}^{n,m+n}$ such that $\m{A & \hat{B}}$ and $\m{\hat{V}^* &W^*}$ are nonsingular. Define
 \[
  \m{R_{11} & R_{12}\\ 0 & R_{22}}\colonequals\m{\hat{V}\\W}\m{A & \hat{B}};
 \]
 $R_{11},R_{22}$ are invertible, since the right-hand side is the product of two invertible matrices. Finally, define $B$ and $V$ from the relations
 \begin{align*}
  \m{A & B}=&\m{A & \hat{B}}\m{I & -R_{12}R_{22}^{-1}\\ 0 & R_{22}^{-1}}, &
  \m{V\\W}=&\m{R_{11}^{-1} & 0\\ 0 & I}\m{\hat{V}\\W}.
  \qedhere
 \end{align*}
\end{proof}

\section{Polynomial rings and Möller--Stetter algorithm}
In commutative algebra, a polynomial ideal $I=(f_1,\dots,f_k) \subseteq\mathbb{C}[x_1,\dots,x_d] $ is called \emph{zero-dimensional} if the system
\[
 \begin{cases}
  f_1(x_1,\dots,x_d)=0\\
  \hfill\vdots\hfill{} \\
  f_k(x_1,\dots,x_d)=0
 \end{cases}
\]
has only a finite number of solutions, or, equivalently \cite[Finiteness Theorem, Section 2.5]{CoxLittleOSheaUsingAlgebraicGeometry}, if the space $\mathbb{C}[x_1,\dots,x_d] / I$ is finite-dimensional. When this holds, we have the following result.
\begin{theorem}[Möller--Stetter]
 Let $I$ be a zero-dimensional ideal, and $f\in \mathbb{C}[x_1,\dots,x_d] / I$. Consider the multiplication map $M_g: \mathbb{C}[x_1,\dots,x_d] / I \to \mathbb{C}[x_1,\dots,x_d] / I$ defined as  $f \mapsto fg$. When a basis of $\mathbb{C}[x_1,\dots,x_d] / I$ is chosen, this map is represented by a matrix. Its eigenvalues (counted with multiplicity) are the values of $g(\hat{x}_1,\dots,\hat{x}_d)$ at every solution (counted with multiplicity) $\hat{x}_1,\dots,\hat{x}_d$ of the polynomial equations.
\end{theorem}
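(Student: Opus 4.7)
The plan is to exploit the structure of $A \colonequals \mathbb{C}[x_1,\dots,x_d]/I$ as a finite-dimensional commutative $\mathbb{C}$-algebra, which the cited finiteness theorem guarantees. Such an algebra is Artinian and, by the classical structure theorem for commutative Artinian rings, splits uniquely as a product $A \cong A_1 \times \dots \times A_s$ of local Artinian $\mathbb{C}$-algebras with maximal ideals $\mathfrak{m}_1,\dots,\mathfrak{m}_s$. I would then invoke the Nullstellensatz (applicable since $\mathbb{C}$ is algebraically closed) to identify the maximal ideals of $A$ with the solutions $\hat{x}^{(1)},\dots,\hat{x}^{(s)}$ of the system, so that $A_i$ is the localization at $\hat{x}^{(i)}$ and $A_i/\mathfrak{m}_i \cong \mathbb{C}$; the algebraic multiplicity of $\hat{x}^{(i)}$ is by definition $\dim_{\mathbb{C}} A_i$.

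Next I would observe that multiplication by $g$ respects the product decomposition, so $M_g$ is block diagonal with blocks $M_g^{(i)}$ acting on $A_i$. In the local ring $A_i$ the element $g - g(\hat{x}^{(i)})\cdot 1$ lies in $\mathfrak{m}_i$, because evaluation at $\hat{x}^{(i)}$ is precisely the canonical quotient $A_i \twoheadrightarrow A_i/\mathfrak{m}_i \cong \mathbb{C}$. Since $\mathfrak{m}_i$ is nilpotent in a local Artinian ring, the operator $M_g^{(i)} = g(\hat{x}^{(i)}) I + N_i$ decomposes with $N_i$ nilpotent, so its only eigenvalue is $g(\hat{x}^{(i)})$, with algebraic multiplicity $\dim_{\mathbb{C}} A_i$.

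Assembling the blocks would yield the theorem: the spectrum of $M_g$ counted with multiplicity is the multiset $\{g(\hat{x}^{(i)})\}_{i=1}^s$, each value repeated $\dim_{\mathbb{C}} A_i$ times. The main technical content is the Artinian decomposition and the identification of $\dim_{\mathbb{C}} A_i$ with the multiplicity of $\hat{x}^{(i)}$; both are standard \cite{CoxLittleOSheaUsingAlgebraicGeometry} and I expect them to be the only non-trivial ingredients. A more elementary route, noting that the commuting operators $M_{x_1},\dots,M_{x_d}$ are simultaneously upper-triangularizable and then observing that on each diagonal entry every generator of $I$ must vanish, shows painlessly that each eigenvalue of $M_g$ has the form $g(\hat{x})$ for some solution $\hat{x}$; however, pinning down the correct multiplicities on that path still seems to require the local analysis above, which I therefore expect to be the main obstacle.
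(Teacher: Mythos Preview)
The paper does not prove this theorem at all: it is stated as a known result and attributed to the reference \cite{CoxLittleOSheaUsingAlgebraicGeometry}, with no argument given. Your proposal is therefore not to be compared against any proof in the paper.

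That said, your sketch is correct and is essentially the standard argument one finds in the cited reference. The decomposition of the Artinian quotient $A=\mathbb{C}[x_1,\dots,x_d]/I$ into a product of local Artinian $\mathbb{C}$-algebras, the identification of the local factors with the solution points via the Nullstellensatz, and the observation that on each factor $M_g$ is a scalar plus a nilpotent are exactly the ingredients used in Cox--Little--O'Shea. Your remark that the elementary simultaneous-triangularization route gives the eigenvalue \emph{set} easily but leaves the multiplicities to the local analysis is also accurate. There is no gap.
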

Solutions of the polynomial system can then be found by solving the associated eigenproblem \cite{CoxLittleOSheaUsingAlgebraicGeometry}. Even though this stronger condition is usually not of interest in the commutative algebra applications, in the 1-variable case it is possible to prove that $M_x$ is a linearization of the single polynomial equation $f_1=0$ that generates the ideal (there is a single one, as $\mathbb{C}[x]$ is a PID). In fact, when the monomial basis is chosen, the multiplication operator is the companion matrix of $f_1$.

We aim to generalize this result to polynomial matrices and to the projective setting in order to produce linearizations. Given a projective matrix polynomial $A(\lambda,\mu)\colonequals \sum_{i=0}^d A_i \lambda^i \mu^{d-i} \in \mathbb{C}^{n,n}_d[\lambda,\mu]$, we define the analogous of the ``generated ideal'' $(A)$ as
\[
 (A)\colonequals\{v\in \mathbb{C}^n_d[\lambda,\mu] : v(\lambda,\mu)=A(\lambda,\mu) u \text{ for some $u\in \mathbb{C}^n$}\}.
\]
Note that $(A)$ is a vector space. When the polynomial is regular, it has dimension exactly $n$ (proof: the map $u\mapsto A(\lambda,\mu)u$ must have trivial kernel); in fact, to prove this we only need the weaker condition that the $A_i$ have trivial common (right) kernel.

A natural generalization of the multiplication map $M_x$ used above is considering the maps
\begin{align*}
M_\lambda:& f(\lambda,\mu)\mapsto \lambda f(\lambda,\mu), \\
M_\mu:& f(\lambda,\mu)\mapsto \mu f(\lambda,\mu).
\end{align*}
Notice, however, that these operations increase the degree of the homogeneous polynomials they operate on, thus it is not immediate to see what we should choose as their domain and codomain to replace $\mathbb{C}[x_1,\dots,x_d]/I$. After some attempts, one ends up with the idea of considering the quotiented operators
\begin{equation}\label{qmultop}
\begin{gathered}
 \bar{M}_\lambda, \bar{M}_\mu: \mathbb{C}^n_{d-1}[\lambda,\mu] \to \mathbb{C}^n_{d}[\lambda,\mu] / (A),\\
 \bar{M}_\lambda: v(\lambda,\mu) \mapsto \lambda v(\lambda,\mu) + (A),\\
 \bar{M}_\mu: v(\lambda,\mu) \mapsto \mu v(\lambda,\mu) + (A),
\end{gathered}
\end{equation}
and the hope that $\mu \bar{M}_\lambda-\lambda \bar{M}_\mu$ is a linearization. Such a hope is well founded, as we see in the following theorem.
\begin{theorem}\label{thm:multop}
Let $A(\lambda,\mu)=A_d \lambda^d + A_{d-1} \lambda^{d-1} \mu + \dotso + A_0 \mu^d$ be any matrix polynomial such that $(A)$ has dimension $n$. Let $W\in\mathbb{C}^{dn,(d+1)n}$ be any matrix with full row rank such that $W\col{A}=0$.
\begin{enumerate}
 \item \label{item1}
The matrices
\begin{align}\label{defws}
W_0\colonequals W&\m{0_{n,dn}\\I_{dn}}, & W_1\colonequals W&\m{I_{dn}\\0_{n,dn}}
\end{align}
represent, using suitable bases, the quotiented multiplication operators $\bar{M}_\lambda$ and $\bar{M}_\mu$ respectively.
\item \label{item2} if $A(\lambda,\mu)$ is nonsingular, then  $W(\lambda,\mu)\colonequals  \mu W_0-\lambda W_1$, or more in general $\mu\bar{M}_\lambda-\lambda\bar{M}_\mu$ (for any choice of the bases) is a strong linearization.
\end{enumerate}
\end{theorem}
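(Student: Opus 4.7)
For part~\ref{item1}, I identify $\mathbb{C}^n_{d-1}[\lambda,\mu]\cong\mathbb{C}^{dn}$ and $\mathbb{C}^n_d[\lambda,\mu]\cong\mathbb{C}^{(d+1)n}$ by stacking coefficients, so that $\sum_i v_i\lambda^i\mu^{d-1-i}$ is represented by $[v_0;v_1;\dots;v_{d-1}]$. Under this identification the subspace $(A)$ is the column range of $\col A$, and the hypotheses on $W$ (full row rank and $W\col A=0$) amount to $\ker W=\operatorname{range}(\col A)$, so $W$ descends to an isomorphism $\mathbb{C}^n_d[\lambda,\mu]/(A)\to\mathbb{C}^{dn}$. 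Multiplication by $\lambda$ shifts the coefficient vector down by one block and is therefore represented by $\m{0_{n,dn}\\I_{dn}}$; composing with $W$ yields $W_0$, the matrix of $\bar M_\lambda$. The computation for $W_1$ is analogous.

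For part~\ref{item2}, I first note that any two admissible $W,W'$ satisfy $W'=KW$ for a constant invertible $K$ (both have kernel $\operatorname{range}(\col A)$), and this merely multiplies the pencil by $K$ on the left, so it suffices to prove the statement for one convenient $W$. Applying Lemma~\ref{bcl} I pick $V\in\mathbb{C}^{n,(d+1)n}$ and $B\in\mathbb{C}^{(d+1)n,dn}$ with $\m{V\\W}\m{\col A & B}=I_{(d+1)n}$. Setting $M_0=\m{0_{n,dn}\\I_{dn}}$, $M_1=\m{I_{dn}\\0_{n,dn}}$, $S(\lambda,\mu)\colonequals\mu M_0-\lambda M_1$, and $L(\lambda,\mu)\colonequals WS(\lambda,\mu)=\mu W_0-\lambda W_1$, a direct block multiplication yields the key identity
\begin{equation*}
\m{V\\W}\m{\col A & S(\lambda,\mu)}=\m{I_n & VS(\lambda,\mu)\\0 & L(\lambda,\mu)}.
\end{equation*}
The right-hand side factors as $\diag(I_n,L)\cdot\m{I_n & VS\\0 & I_{dn}}$ with the second factor unimodular, so the polynomial matrix $\m{\col A & S(\lambda,\mu)}$ is equivalent (in the paper's sense) to $\diag(I_n,L(\lambda,\mu))$.

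The crux is to show that $\m{\col A & S(\lambda,\mu)}$ is also equivalent to $\diag(A(\lambda,\mu),I_{dn})$. Dehomogenizing by $\mu=1,\lambda=x$ so that we work over $\mathbb{C}[x]$, the single unimodular row operation ``row $1\leftarrow\text{row }1+x\,\text{row }2+x^2\,\text{row }3+\dots+x^d\,\text{row }(d+1)$'' telescopes through the block bidiagonal structure of $S(x)$ and replaces the first block row by $[A(x),0,\dots,0]$, leaving all other rows unchanged. The bottom-right $dn\times dn$ submatrix is then block upper bidiagonal with $I$ on the diagonal and $-xI$ on the superdiagonal, hence unimodular; used as column operations it first kills the entries $A_1,\dots,A_d$ still present in column $1$ (without disturbing row $1$, whose entries in columns $2,\dots,d+1$ are already zero) and is then itself reduced to $I_{dn}$ by back-substitution. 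The final form is $\diag(A(x),I_{dn})$. Combining with the previous paragraph, $\diag(I_n,L(x))\sim\diag(A(x),I_{dn})$, and cancelling the common identity block---valid over the PID $\mathbb{C}[x]$ by uniqueness of the Smith normal form---yields $L(x)\sim\diag(A(x),I_{(d-1)n})$, so $L$ is a linearization.

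To upgrade to strong linearization I apply the same construction to $\rev A$, taking $\widetilde W\colonequals WJ$, where $J$ is the block anti-diagonal permutation reversing the $d+1$ blocks of $\mathbb{C}^{(d+1)n}$; since $\col{\rev A}=J\col A$, one has $\widetilde W\col{\rev A}=W\col A=0$. The identities $JM_0=M_1J'$ and $JM_1=M_0J'$ for the analogous reversal $J'$ on $\mathbb{C}^{dn}$ give $\widetilde WS(\lambda,\mu)=-\rev L(\lambda,\mu)\cdot J'$, so $\rev L$ is equivalent via constant invertible matrices to the pencil produced by the same construction applied to $\rev A$; by the just-proved linearization result this latter pencil is a linearization of $\rev A$, and hence so is $\rev L$. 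The main technical hurdle in this plan is the explicit row/column reduction of $\m{\col A & S}$ to $\diag(A,I_{dn})$, which requires a carefully ordered sequence of polynomial elementary operations but is made tractable by the bidiagonal structure of $S$.
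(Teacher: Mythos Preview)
Your argument for Part~\ref{item1} is essentially the paper's: both identify the monomial-stacking bases, recognise $(A)=\operatorname{range}\col A$, and use the full-row-rank condition $\ker W=\operatorname{range}\col A$ to realise $W$ as the quotient map, so that $W_0,W_1$ are the matrices of $\bar M_\lambda,\bar M_\mu$.

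For Part~\ref{item2} you take a genuinely different route. The paper defers the proof and then dispatches it in one line using the left-duality machinery of Theorem~\ref{thm:duallin}: one checks $W_1C_0=W_0C_1$ (equivalently $M_1C_0-M_0C_1=\col A\begin{bmatrix}I_n&0&\cdots&0\end{bmatrix}$, which is annihilated by $W$), so $W(\lambda,\mu)$ is a left dual of the companion form $C(\lambda,\mu)$ and inherits the strong-linearization property via the explicit \emph{constant} conjugation \eqref{explicitconj}. Your proof is instead a direct unimodular reduction: you embed $L$ into the $(d{+}1)n\times(d{+}1)n$ matrix $\begin{bmatrix}\col A & S(\lambda,\mu)\end{bmatrix}$, reduce it two ways to $\diag(I_n,L)$ and to $\diag(A,I_{dn})$, cancel an $I_n$ via Smith normal form, and then rerun the whole argument on $\rev A$ using the block-reversal permutations $J,J'$ to obtain the strong property. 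This is correct and more self-contained---it needs neither the duality theorem nor the companion form---but it is longer, the strong-linearization step is a separate computation rather than automatic, and the equivalence you exhibit is only polynomial-unimodular, whereas the paper's duality argument produces a \emph{strong} (constant) equivalence $W(\lambda,\mu)=W(\alpha,\beta)C(\alpha,\beta)^{-1}C(\lambda,\mu)$ in one stroke. Conversely, your approach would work without first developing Theorem~\ref{thm:duallin}, while the paper's purpose is precisely to showcase that theorem as a general tool.
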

We give here only a proof of Part~\ref{item1}, and defer the one of Part~\ref{item2} to the next section, as it is immediate once we set up the right machinery.
\begin{proof}[Proof (of Part~\ref{item1} only)]
For each degree $k\in\mathbb{N}$, we choose
\[
 \mu^k e_1, \mu^k e_2,\dots, \mu^k e_n, \mu^{k-1}\lambda e_1,\dots,\mu^{k-1}\lambda e_n, \mu^{k-2}\lambda^2 e_1\dots, \lambda^k e_n
\]
as a basis of $\mathbb{C}^n_k[\lambda,\mu]$. Then, it is easy to represent $(A)$ and the non-quotiented multiplication operators
\begin{gather*}
 M_\lambda, M_\mu : \mathbb{C}^n_{d-1}[\lambda,\mu] \to \mathbb{C}^n_{d}[\lambda,\mu]\\
 M_\lambda: v(\lambda,\mu) \mapsto \lambda v(\lambda,\mu),\\
 M_\mu: v(\lambda,\mu) \mapsto \mu v(\lambda,\mu);
\end{gather*}
namely,
\begin{align*}
(A)=&\operatorname{Im}\col{A}, &
 M_\lambda=&\m{0_{n,dn}\\I_{dn}},
 &
 M_\mu=&\m{I_{dn}\\0_{n,dn}}.
\end{align*}
We now need a basis of the quotient space $\mathbb{C}^n_{d}[\lambda,\mu] / (A)$. We know from elementary linear algebra that if we complete a basis of $(A)$ to a basis of $\mathbb{C}^n_{d}[\lambda,\mu]$ in any way with $f_{n+1},f_{n+2},\dots,f_{(d+1)n}$, then the vectors $f_i+(A)$ form a basis of the quotient space.

Lemma~\ref{bcl} gives us $V\in \mathbb{C}^{n,dn}$ and $B\in\mathbb{C}^{dn,n}$ such that
\[
 \m{V\\W}\m{\col{A} & B}=I;
\]
in particular, the columns of $B$ complete those of $\col{A}$ to a basis and therefore induce a basis of the quotient space. In particular, since
\begin{align*}
v+(A)=&\left(\col{A} V+BW \right)v+(A) =BWv+(A),
\end{align*}
then $Wv$ gives the coordinates of $v(\lambda,\mu)+(A)$ in the chosen quotient space basis. In particular,
\begin{equation}\label{explWlin}
\begin{aligned}
\lambda \bar{M}_\mu - \mu \bar{M}_\lambda=&
 W\left(\lambda\m{I_{dn}\\0_{n,dn}} -\mu \m{0_{n,dn}\\I_{dn}} \right).
\end{aligned}
 \qedhere
\end{equation}
\end{proof}

\section{Dual linearizations}
We introduce in this section a general technique to derive new linearizations from existing ones.
\begin{theorem}[Left duality]\label{thm:duallin}
 Let $L(\lambda,\mu)\colonequals \mu L_0-\lambda L_1\in \mathbb{C}_1^{N,N}[\lambda,\mu]$ be a nonsingular matrix pencil, and let
$M(\lambda,\mu)\colonequals \mu M_0-\lambda M_1\in \mathbb{C}_1^{N,N}[\lambda,\mu]$ be another matrix pencil of the same size such that
 \begin{enumerate}
  \item $M_1 L_0 = M_0 L_1$;
  \item $\row{M}$ has full row rank $N$.
 \end{enumerate}
Then, $M(\lambda,\mu)$ is strongly equivalent to $L(\lambda,\mu)$. In particular, if the latter is a (strong) linearization of a matrix polynomial, then the former is a (strong) linearization as well.
\end{theorem}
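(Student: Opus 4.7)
The plan is to exhibit explicit constant invertible matrices $E$ and $F$ realizing the strong equivalence $E L F = M$, by evaluating $L$ and $M$ at a single suitable test point $(\lambda_0,\mu_0)$. The engine of the proof is the polynomial identity
\begin{equation*}
 M(\lambda_0,\mu_0)\, L(\lambda,\mu) = M(\lambda,\mu)\, L(\lambda_0,\mu_0),
\end{equation*}
valid for \emph{every} pair of points. To verify it I would expand both sides; the ``symmetric'' terms $\mu_0\mu M_0 L_0$ and $\lambda_0\lambda M_1 L_1$ cancel, and what remains collects to $(\mu\lambda_0 - \mu_0\lambda)(M_0 L_1 - M_1 L_0)$, which vanishes by hypothesis~1.

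Next, since $L$ is nonsingular there exists $(\lambda_0,\mu_0)$ with $L(\lambda_0,\mu_0)$ invertible; I fix such a point and set $E := M(\lambda_0,\mu_0)$ and $F := L(\lambda_0,\mu_0)^{-1}$. The identity rearranges to $M(\lambda,\mu) = E\, L(\lambda,\mu)\, F$ as an equality of pencils. Matching the coefficients of $\mu$ and of $-\lambda$ gives $M_0 = E L_0 F$ and $M_1 = E L_1 F$, so that
\begin{equation*}
 \row{M} = E \cdot \row{L} \cdot (I_2 \otimes F).
\end{equation*}

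The only genuinely delicate point, and where hypothesis~2 enters, is showing that $E$ is invertible. Because $L$ is nonsingular, $\row{L}$ has full row rank $N$ and is thus surjective as a map $\mathbb{C}^{2N}\to\mathbb{C}^N$; combined with the invertibility of $I_2\otimes F$ this yields $\operatorname{rank}\row{M} = \operatorname{rank} E$, and hypothesis~2 then forces $\operatorname{rank} E = N$. At this point $E L F = M$ with $E,F$ constant and invertible is exactly the strong equivalence claimed; the ``in particular'' clause is immediate, since strong equivalence composes with any polynomial equivalence realizing a (strong) linearization of $L$, and commutes with $\rev$.

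I expect the main obstacle to be spotting the pencil-commutativity identity above; once it is in hand, the Bases Completion Lemma is not actually needed here, and the rest is essentially bookkeeping driven by the row-rank hypothesis.
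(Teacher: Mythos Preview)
Your proof is correct and rests on the same ``pencil-commutativity'' identity $M(\lambda_0,\mu_0)L(\lambda,\mu)=M(\lambda,\mu)L(\lambda_0,\mu_0)$ that the paper uses, so the overall route is the same. The one genuine difference is how you establish that $E=M(\lambda_0,\mu_0)$ is invertible: the paper invokes the Bases Completion Lemma together with a Schur-complement argument to show $M(\alpha,\beta)$ is nonsingular whenever $L(\alpha,\beta)$ is, whereas you read off $\operatorname{rank}E=\operatorname{rank}\row{M}=N$ directly from $\row{M}=E\,\row{L}\,(I_2\otimes F)$ and the surjectivity of $\row{L}$. Your argument is shorter and, as you observe, bypasses Lemma~\ref{bcl} entirely; the paper's detour has the minor bonus of showing nonsingularity of $M$ at \emph{every} regular point of $L$, but that is not needed for the theorem as stated.
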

Notice that the first condition can be rewritten in the more appealing form
\begin{align*}
 \row{M}\mathcal{J}\col{L}=&0, & \mathcal{J}\colonequals\m{0 & I_N\\-I_N &0}.
\end{align*}
\begin{proof}
We first prove that 
We apply Lemma~\ref{bcl} to $\row{M}$ and $\mathcal{J}\col{L}$, obtaining
\[
 \m{V_0 & V_1\\M_0 & -M_1}\m{L_1 & B_1\\L_0 & B_0}=I_{2n}.
\]
Let us choose $(\alpha,\beta)\in\mathbb{C}^2$, normalized with $\abs{\alpha}^2+\abs{\beta}^2=1$, such that $L(\alpha,\beta)$ is nonsingular; then, we have the following identity, where $\ast$ denotes blocks that are not of interest here:
\begin{align*}
 \m{-L(\alpha,\beta) & \ast \\ \ast & \ast}=&\m{\alpha I_N & -\beta I_N\\ \bar{\beta}I_N & \bar{\alpha}I_N}\m{L_1 & B_1\\L_0 & B_0}\\=&\left(\m{V_0 & V_1\\M_0 & -M_1}\m{\bar{\alpha}I_N & \beta I_N\\ -\bar{\beta} I_N & \alpha I_N}\right)^{-1}=\m{\ast & \ast \\ \ast & M(\alpha,\beta)}^{-1}.
\end{align*}
Standard Schur complement theory shows that $M(\alpha,\beta)$ is the inverse of the Schur complement of $-L(\alpha,\beta)$, and thus the former is nonsingular whenever the latter is so.

We could probably extend this argument to prove that the Jordan structures of the two pencils coincide; however, we switch to a different one that reveals more clearly the form of the strong equivalence relation between the two pencils. Let $(\alpha,\beta)$ be so that both $M(\alpha,\beta)$ and
$L(\alpha,\beta)$ are nonsingular; thanks to the first condition, the identity
\[
 M(\alpha,\beta)L(\lambda,\mu)=M(\lambda,\mu)L(\alpha,\beta)
\]
holds, and thus
\begin{equation}\label{explicitconj}
 M(\alpha,\beta)^{-1} M(\lambda,\mu)L(\alpha,\beta) = L(\lambda,\mu). 
\end{equation}
\end{proof}
If $M(\lambda,\mu)$ satisfies the hypotheses of Theorem~\ref{thm:duallin}, we say that it is a \emph{left dual} of $L(\lambda,\mu)$. Notice that a left dual is uniquely determined up to left multiplication by a nonsingular scalar matrix; in particular, its right eigenvectors are well-defined.
\begin{corollary}[Right duality]
The same results hold if we replace the two conditions in Theorem~\ref{thm:duallin} with
\begin{enumerate}
\item $L_0 M_1 = L_1 M_0$;
  \item $\col{M}$ has full column rank $N$.
\end{enumerate}
\end{corollary}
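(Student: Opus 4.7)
The plan is to deduce the right duality statement from Theorem~\ref{thm:duallin} by a transposition trick, rather than repeating the whole Schur-complement/conjugation argument. Set $\widetilde{L}(\lambda,\mu) \colonequals L(\lambda,\mu)^T = \mu L_0^T - \lambda L_1^T$ and $\widetilde{M}(\lambda,\mu) \colonequals M(\lambda,\mu)^T = \mu M_0^T - \lambda M_1^T$. Since $\det \widetilde{L}(\lambda,\mu) = \det L(\lambda,\mu) \not\equiv 0$, the pencil $\widetilde{L}$ is nonsingular.

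Next I would check that the hypotheses of Theorem~\ref{thm:duallin} hold for the pair $(\widetilde{L},\widetilde{M})$. Transposing the first condition $L_0 M_1 = L_1 M_0$ gives $M_1^T L_0^T = M_0^T L_1^T$, which is exactly condition~1 of Theorem~\ref{thm:duallin} with $L_i$ replaced by $L_i^T$ and $M_i$ by $M_i^T$. For the rank hypothesis, observe that
\[
 \row{\widetilde{M}} = \m{M_0^T & M_1^T} = (\col{M})^T,
\]
so $\row{\widetilde{M}}$ has full row rank $N$ iff $\col{M}$ has full column rank $N$; this is condition~2 of the present corollary.

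Theorem~\ref{thm:duallin} then provides constant nonsingular matrices $E, F \in \mathbb{C}^{N,N}$ realizing the strong equivalence $E\,\widetilde{M}(\lambda,\mu)\,F = \widetilde{L}(\lambda,\mu)$, as one reads off from the explicit conjugation \eqref{explicitconj} with $E = \widetilde{M}(\alpha,\beta)^{-1}$ and $F = \widetilde{L}(\alpha,\beta)$. Transposing this identity yields
\[
 F^T\,M(\lambda,\mu)\,E^T = L(\lambda,\mu),
\]
which is the desired strong equivalence between $M$ and $L$. The (strong) linearization statement then follows, as noted in Section~2, from the general fact that pencils strongly equivalent to a (strong) linearization are themselves (strong) linearizations.

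I do not foresee a real obstacle here: the whole content of the corollary is that the left/right asymmetry in Theorem~\ref{thm:duallin} is only apparent and can be removed by transposing the entire setup. The only minor care needed is in matching the rank condition under transposition and in verifying that the matrices $E,F$ produced by Theorem~\ref{thm:duallin} are genuinely constant (independent of $\lambda,\mu$), which is immediate from the formula \eqref{explicitconj}.
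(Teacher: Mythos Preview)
Your argument is correct and is precisely the natural derivation the paper leaves implicit by stating the result as a corollary without proof: transpose everything, apply Theorem~\ref{thm:duallin} to $(\widetilde{L},\widetilde{M})=(L^T,M^T)$, and transpose back to obtain the explicit strong equivalence $L(\alpha,\beta)\,M(\lambda,\mu)\,M(\alpha,\beta)^{-1}=L(\lambda,\mu)$. There is nothing to add.
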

In this case, we say that $M(\lambda,\mu)$, uniquely determined up to right multiplication by a nonsingular scalar matrix, is the \emph{right dual} of $L(\lambda,\mu)$.

With Theorem~\ref{thm:duallin} in place, the proof of Part~\ref{item2} of Theorem~\ref{thm:multop} is a one-liner.
\begin{proof}[Proof of Theorem \ref{thm:multop}, Part~\ref{item2}]
$W_1 C_0=W_0 C_1$, thus $W(\lambda,\mu)$ is the left dual of a companion form!
\end{proof}

\section{Constructing duals}
The relation $M_1 L_0=M_0 L_1$ has been studied estensively in the context of pencil arithmetic and inverse-free matrix iterative algorithms \cite{Mal89,BaiDG97,DemDH07,BenB06,ChuFL05}. Two main techniques exist for constructing $M_0, M_1 \in \mathbb{C}^{m,m}$ starting from $L_0,L_1 \in \mathbb{C}^{m,m}$.
\begin{description}
 \item[QR factorization] \cite{BenB06,Ben97} Construct the QR factorization
 \[
  \begin{bmatrix} L_0 \\ L_1 \end{bmatrix}
  =
    \begin{bmatrix}
   Q_{11} & Q_{12}\\
   Q_{21} & Q_{22}
  \end{bmatrix}
\begin{bmatrix}
           R \\ 0
          \end{bmatrix}
 \]
and take $M_1=Q_{12}^*$, $M_0=-Q_{22}^*$. In practice, a QRP factorization should be used, since $\col{A}$ being close-to-rank-deficient is a concern here.

\begin{example}\label{WQR}
 Though formally it is a different problem, the same strategy works for selecting a good $W$ to use in \eqref{defws}. Construct a QR factorization
 \[
 \col{A}=
  \begin{bmatrix}
   Q_{00} & Q_{01} & \dots & Q_{0d}\\
   Q_{10} & Q_{11} & \dots & Q_{1d}\\
   \vdots & \vdots & \ddots & \vdots\\
   Q_{d0} & Q_{d1} & \dots & Q_{dd}
  \end{bmatrix}
  \begin{bmatrix}
   R \\ 0 \\ \vdots \\0
  \end{bmatrix}
 \]
 (all the blocks are $n\times n$) and take
 \[
  W=\begin{bmatrix}
   Q_{01} & Q_{02} & \dots & Q_{0d}\\
   Q_{11} & Q_{12} & \dots & Q_{1d}\\
   \vdots & \vdots & \ddots & \vdots\\
   Q_{d1} & Q_{d2} & \dots & Q_{dd}
    \end{bmatrix}^*.
 \]
\end{example}

\item[Enforcing an identity block] \cite{ChuFL05,MehP10_ppt} Suppose that the identity matrix is a submatrix of $\begin{bmatrix}L_0 \\ L_1\end{bmatrix}$. Then, we can select a permutation matrix $\Pi \in \mathbb{C}^{2m,2m}$ and $X\in\mathbb{C}^{m,m}$ such that
\[
 \begin{bmatrix}L_0\\L_1\end{bmatrix} = \Pi \begin{bmatrix}I\\X\end{bmatrix}.
\]
Then, the identity
\[
 0=\left(  \begin{bmatrix}
  -X & I
 \end{bmatrix}
 \Pi^{-1}\right)
 \Pi
 \begin{bmatrix}
  I\\X
 \end{bmatrix}
\]
holds, and thus we can choose
\[
 \begin{bmatrix}
  M_1 & -M_0
 \end{bmatrix}
 =
 \begin{bmatrix}
  -X & I
 \end{bmatrix}
 \Pi^{-1}.
\]
Generalizing slightly, if a $m\times m$ submatrix $Y$ of $\begin{bmatrix}L_0\\L_1\end{bmatrix}$ is known to be nonsingular, we have
\[
 \begin{bmatrix}L_0\\L_1\end{bmatrix}= \Pi \begin{bmatrix}Y\\Z\end{bmatrix} = \Pi \begin{bmatrix}I\\ZY^{-1}\end{bmatrix}Y
\]
and thus
\[
 \begin{bmatrix}
  M_1 & -M_0
 \end{bmatrix}
 =
 \begin{bmatrix}
  -ZY^{-1} & I
 \end{bmatrix}
 \Pi^{-1}. 
\]
\begin{example}
The simplest nontrivial pencil in the Fiedler family \cite[Example~2.5]{AntV04} is
\begin{equation}\label{thefiedler}
 \lambda
 \begin{bmatrix}
  0 & A_0 & 0\\
  I & A_1 & 0\\
  0 & 0 & I
 \end{bmatrix}
-\mu
\begin{bmatrix}
 I & 0 & 0\\
 0 & -A_2 & -A_3\\
 0 & I & 0
\end{bmatrix}.
\end{equation}
We choose $\Pi$ as the block permutation $(1,2,3,4,5,6) \mapsto (1,3,6,4,5,2)$, so that the leading block equals the identity matrix. Then, the method yields
\[
 \begin{bmatrix}
  M_1 & -M_0
 \end{bmatrix}
=\begin{bmatrix}
  0 & 0 & -A_0 & I & 0 & 0\\
  -I & 0 & -A_1 & 0 & I & 0\\
  0 & I & A_2 & 0 & 0 & A_3
 \end{bmatrix},
\]
which is, up to sign changes, a companion form. Conversely, if one did not know about Fiedler pencils at all, they could derive  \eqref{thefiedler} with the same technique as the right dual of this companion form.
\end{example}
\end{description}

\section{Relationship with other known linearizations}
\subsection{Vector spaces of linearizations}
A linearization $L(\lambda,\mu)\colonequals \mu L_0 - \lambda L_1$ is a right dual of $W(\lambda,\mu)$ if and only if
\[
 0=W_0 L_1 - W_1 L_0 = W\left(\m{0_{n,dn}\\I_{dn}}L_1-\m{I_{dn}\\0_{n,dn}}L_0\right).
\]
Since $\ker W=\col{A}$, this holds if and only if
\begin{equation}\label{rowshifted2}
 \m{0_{n,dn}\\I_{dn}}L_1-\m{I_{dn}\\0_{n,dn}}L_0 = \m{A_0\\ A_1\\ \vdots \\ A_d}Z
\end{equation}
for some $Z\in\mathbb{C}^{n,dn}$. This generalizes \eqref{rowshifted}, and thus all pencils in $\mathbb{L}_2$ are contained in the right dual of $W(\lambda,\mu)$. Analogously, $\mathbb{L}_1$ can be obtained as left dual of another companion form.

In \cite{MacMMM06}, it is proved that any nonsingular $L(\lambda,\mu)$ satisfying \eqref{rowshifted} is a strong linearization. Using Theorem~\ref{thm:duallin}, we can replace nonsingularity with a weaker condition.
\begin{corollary}
 Let $A(\lambda,\mu)$ be a nonsingular matrix polynomial, and $L(\lambda,\mu)$ be any matrix pencil satisfying \eqref{rowshifted2}. Then, $L$ is a strong linearization of $A(\lambda,\mu)$ if and only if $\col{L}$ has full column rank.
\end{corollary}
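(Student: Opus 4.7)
The plan is to use the right-duality framework of Theorem~\ref{thm:duallin} to reduce the corollary to a very short argument, exactly as the paragraph preceding the statement hints. First I would reinterpret \eqref{rowshifted2} as the equality $W_1 L_0 = W_0 L_1$. This equivalence holds because $A$ is nonsingular, so $\col A$ has full column rank $n$, and $W$ has full row rank $dn$, so its $n$-dimensional kernel coincides with $\operatorname{Im}\col A$; reading \eqref{rowshifted2} column by column, it says exactly that each column of its left-hand side lies in $\ker W$. Thus \eqref{rowshifted2} is precisely the first hypothesis of the Right Duality Corollary relating $W(\lambda,\mu)$ to $L(\lambda,\mu)$.

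For the ``if'' direction, assume in addition that $\col L$ has full column rank. Both hypotheses of the Right Duality Corollary applied to $W(\lambda,\mu)$ are then satisfied, so $L(\lambda,\mu)$ is strongly equivalent to $W(\lambda,\mu)$. By Theorem~\ref{thm:multop}, part~\ref{item2}, $W(\lambda,\mu)$ is a strong linearization of $A(\lambda,\mu)$, and strong linearizations are preserved under strong equivalence; hence $L$ is a strong linearization of $A$.

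For the ``only if'' direction I would argue by contraposition. If $\col L$ is column-rank-deficient, pick a nonzero $v$ in its kernel; then $L_0 v = L_1 v = 0$, so $L(\lambda,\mu)v\equiv 0$, making $L$ a singular pencil. But a strong linearization of the nonsingular polynomial $A(\lambda,\mu)$ must itself be nonsingular, since \eqref{deflin} gives $\det L = c\det A$ for a nonzero constant $c$ --- a contradiction. No step of the proof is a real obstacle; the substance of the corollary is the reinterpretation of \eqref{rowshifted2} as the duality relation $W_1 L_0 = W_0 L_1$, which lets the duality machinery supersede the ad-hoc nonsingularity hypothesis used in \cite{MacMMM06}.
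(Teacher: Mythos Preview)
Your proposal is correct and is exactly the argument the paper intends: the corollary is stated without proof, but the preceding paragraph already identifies \eqref{rowshifted2} with the right-duality relation $W_0 L_1 = W_1 L_0$, so the ``if'' direction is the Right Duality Corollary applied with $W(\lambda,\mu)$ as the nonsingular pencil (via Theorem~\ref{thm:multop}\ref{item2}), and the ``only if'' direction is the trivial observation that a rank-deficient $\col L$ forces $L$ to be singular. Your write-up makes both directions explicit and adds nothing beyond what the paper's setup implies.
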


\subsection{Fiedler pencils}
We have already shown in Example~\ref{thefiedler} that pencil duality can be used to obtain naturally at least one Fiedler pencil starting from the companion form. This is no accident: the approach used in \cite{AntV04} to prove that the Fiedler pencils are linearizations can be reinterpreted in our setting as a sequence of duality operations that transforms a generic Fiedler pencil $F(\lambda,\mu)$ into $C(\lambda,\mu)$. Namely, one can recognize that \cite[Lemma~2.2]{AntV04} there is essentially a special case of our Theorem~\ref{thm:duallin}. In particular, the result proved here provides a correct proof of its part (b), which is obtained in \cite{AntV04} using the false implication $M(\lambda,\mu)$ singular $\Rightarrow$ $\row{M}$ is rank-deficient.

\section{Eigenvector recovery}
It follows from \eqref{explicitconj} that for each $(\alpha,\beta)$ such that $A(\alpha,\beta)$ is nonsingular
\begin{equation}\label{checkeig}
\begin{aligned}
 \check{y}^*\colonequals&\hat{y}^*W(\alpha,\beta)^{-1},\\
 \check{x}\colonequals&C(\alpha,\beta)\hat{x}
\end{aligned}
\end{equation}
are left and right eigenvectors of $W(\lambda,\mu)$. Different choices of $(\alpha,\beta)$ in \eqref{checkeig} yield the same $\check{x}$ and $\check{y}$, up to normalization; this is proved by the following computations, which highlight the role of the expression $\beta\lambda-\alpha\mu$.
\begin{align*}
 &\m{\mu^d y^* & \mu^{d-1}\lambda y^* & \dotsm & \lambda^d y^*}B(\beta W_0-\alpha W_1)\\
 =&\m{\mu^d y^* & \mu^{d-1}\lambda y^* & \dotsm & \lambda^d y^*} BW \left(\beta\m{0_{n\times dn}\\ I_{dn}}-\alpha\m{I_{dn}\\0_{n\times dn}}\right)\\
 =&\m{\mu^d y^* & \mu^{d-1}\lambda y^* & \dotsm & \lambda^d y^*} (I_{(d+1)n}-AV) \left(\beta\m{0_{n\times dn}\\ I_{dn}}-\alpha\m{I_{dn}\\0_{n\times dn}}\right)\\
 =&\m{\mu^d y^* & \mu^{d-1}\lambda y^* & \dotsm & \lambda^d y^*} I_{(d+1)n} \left(\beta\m{0_{n\times dn}\\ I_{dn}}-\alpha\m{I_{dn}\\0_{n\times dn}}\right)\\
 =&(\beta\lambda-\alpha\mu) \hat{y}^*
\end{align*}
and thus
\[
 \check{y}^*=\frac{1}{\beta\lambda-\alpha\mu}\m{\mu^d y^* & \mu^{d-1}\lambda y^* & \dotsm & \lambda^d y^*}B.
\]
For the right eigenvalues, one can verify that in the case $\lambda\neq 0,\mu\neq 0$
\begin{equation}\label{checkx}
\begin{aligned}
 \check{x}=& C(\lambda,\mu) \hat{x} =\\
=& -(\beta\lambda-\alpha\mu)
\frac{1}{\mu}
\m{
0 & 1 & \frac{\lambda}{\mu} & \frac{\lambda^2}{\mu^2} &  \dotsm & \frac{\lambda^{d-1}}{\mu^{d-1}}\\
  & \ddots & \ddots & \ddots & \ddots & \vdots \\
  & & 0 & 1 & \frac{\lambda}{\mu} & \frac{\lambda^2}{\mu^2}\\
  & & & 0 & 1 &\frac{\lambda}{\mu}\\
   & & & & 0 & 1
}
\m{A_0\\A_1\\ A_2 \\ \vdots \\ A_d}x\\
=&(\beta\lambda-\alpha\mu)
\frac{1}{\lambda}
\m{
1 & 0 \\
\frac{\mu}{\lambda} & 1 & 0\\
\frac{\mu^2}{\lambda^2} & \frac{\mu}{\lambda} & 1 & 0\\
\vdots & \ddots & \ddots & \ddots  & \ddots\\
\frac{\mu^{d-1}}{\lambda^{d-1}} & \dotsm & \frac{\mu^2}{\lambda^2} & \frac{\mu}{\lambda} & 1 & 0\\
}
\m{A_0\\A_1\\ A_2 \\ \vdots \\ A_d}x,\\
\end{aligned}
\end{equation}
and when $\lambda=0$ or $\mu=0$ the version without vanishing denominators is valid as well.

\section{Conditioning}
In this section, we wish to estimate the conditioning of the linearization $W(\lambda,\mu)$ proposed here, using the formula from \cite{HigMT06}
\[
 \kappa_W(\lambda,\mu)=\left(\abs{\lambda}^2\norm{W_1}^2+\abs{\mu}^2\norm{W_0}^2\right)^{1/2}\frac{\norm{\check{x}}\norm{\check{y}}}{\abs{\check{y}^*\left(\bar{\mu} W_1+\bar{\lambda} W_0\right)\check{x}}}.
\]
We estimate separately the factors in the formula. Using an argument similar to \cite[page~1020]{HigMT06}, we rewrite the denominator as
\begin{align*}
 \check{y}^*\val{\left(\bar{\mu} W_1+\bar{\lambda} W_0\right)}{(\lambda,\mu)} \check{x} = &\hat{y}^*\left(\bar{\mu} C_1+\bar{\lambda} C_0\right)\hat{x}=y^*\val{\left(\bar{\mu} D_\lambda P-\bar{\lambda} D_\mu P\right)}{(\lambda,\mu)}x.
\end{align*}

Normalize now so that $\abs{\lambda}^2+\abs{\mu}^2=1$, and notice that we are always in one of the two cases
\[
 \begin{cases}
  \abs{\lambda}\leq\abs{\mu} \Rightarrow  \abs{\frac{\lambda}{\mu}}\leq 1,  \abs{\frac{1}{\mu}}\leq \sqrt{2},\\
  \abs{\mu}\leq\abs{\lambda} \Rightarrow  \abs{\frac{\mu}{\lambda}}\leq 1,  \abs{\frac{1}{\lambda}}\leq \sqrt{2}.\\
 \end{cases}
\]
In both cases, we may estimate in \eqref{checkx}
\[
 \norm{\check{x}} \leq \abs{\beta\lambda-\alpha\mu}\sqrt{2} T(d)\norm{\col{A}},
\]
where
\[
 T(d)=\norm{\begin{bmatrix}1 & 1 & \dotsm & 1\\ & 1 & \ddots & \vdots \\ & & \ddots & 1\\ & & & 1\end{bmatrix}}.
\]
An elementary linear algebra argument \cite{mathov} yields
\begin{equation}\label{mathov}
 T(d)=\frac{1}{2\sin\frac{\pi}{2d+2}} \approx \frac{2d}{\pi},
\end{equation}
so $T(d)$ grows linearly with $d$.

An easier estimate holds for $\check{y}$:
\[
 \norm{\check{y}} \leq \frac{1}{\abs{\beta\lambda-\alpha\mu}}\norm{y} \norm{\abs{\Lambda}_{\lambda,\mu}}\norm{B},
\]
with
\[
\abs{\Lambda}_{\lambda,\mu}\colonequals \begin{bmatrix}\abs{\lambda}^{d} & \abs{\lambda}^{d-1}\abs{\mu} & \dotsm & \abs{\mu}^{d}\end{bmatrix}.
\]
Putting everything together, we obtain
\[
 \kappa_W(\lambda,\mu)\leq \left(\abs{\lambda}^2\norm{W_1}^2+\abs{\mu}^2\norm{W_0}^2\right)^{1/2}\sqrt{2}T(d)\norm{\abs{\Lambda}_{\lambda,\mu}}\norm{\col{A}}\norm{B}\frac{\norm{x}\norm{y}}{\abs{y^*\val{\left(\bar{\mu} D_\lambda P-\bar{\lambda} D_\mu P\right)}{(\lambda,\mu)}x}}.
\]
The bound gets simpler if we choose $W$ with orthonormal columns as suggested in Example~\ref{WQR}; in this case $\norm{W_0}\leq 1$, $\norm{W_1}\leq 1$, $\norm{B}\leq 1$, and thus
\[
 \kappa_W(\lambda,\mu)\leq \sqrt{2}T(d)\norm{\abs{\Lambda}_{\lambda,\mu}}\norm{\col{A}}\frac{\norm{x}\norm{y}}{\abs{y^*\left(\bar{\mu} \val{D_\lambda P-\bar{\lambda} D_\mu P\right)}{(\lambda,\mu)}x}}.
\]
Apart from the moderate factor $\sqrt{2}T(d)$, the only difference with the condition number of the polynomial eigenvalue problem is that $\norm{\abs{\Lambda}_{\lambda,\mu}}\norm{\col{A}}$ is replaced by
\[
\left(\sum_{i=0}^d\abs{\lambda}^{2i}\abs{\mu}^{2(d-i)}\norm{A_i}\right)^{1/2} = \abs{\Lambda}_{\lambda,\mu}\m{\norm{A_0}\\ \norm{A_1}\\ \vdots \\ \norm{A_d}}
\]
there. This reveals that the only problematic case is the one in which the two vectors appearing in this expression are nearly orthogonal. In the quadratic case, if proper scaling is performed, this can happen only if $\norm{A_1}\gg \norm{A_0}+\norm{A_2}$ and the eigenvalue is close to either $0$ or $+\infty$. Incidentally, \cite[Theorem~4.3]{HigMT06} shows that this is the problematic case for the $\mathbb{DL}$ linearizations as well.

In fact, for $W(\lambda,\mu)$ the conditioning of the linearization is only part of the story, since getting from the original data (coefficients of $A(\lambda,\mu)$) to $W(\lambda,\mu)$ requires a QRP factorization. Multiplying the conditioning of the QRP factorization by this condition number is likely to overestimate the error. Therefore, in the next section we report numerical results to obtain an practical assessment of the accuracy of the eigenvalues.

\section{Numerical experiments}
Our primary goal in this section is comparing the quality of the eigenvalues computed from $W(\lambda,\mu)$ with that of two common choices for unstructured problems. One is the companion form \eqref{compform}, which is used in Matlab's \texttt{polyeig}; the other is the strategy suggested in \cite{HigMT06}, namely,
\begin{itemize}
 \item if for an eigenvalue $\abs{x}>1$ (i.e., $\abs{\lambda}>\abs{\mu}$), then use the $\mathbb{DL}$ linearization with $v=e_1$
 \item otherwise ($\abs{x}\leq 1$, i.e., $\abs{\lambda}\leq \abs{\mu}$), use the $\mathbb{DL}$ linearization with $v=e_d$.
\end{itemize}
Their theoretical result proving a bound on the eigenvalue condition numbers is valid only in the case in which there are no $0$ or $\infty$ eigenvalues, i.e., $A_0$ and $A_d$ are nonsingular; we restrict the comparison to this case for now.

As stated before, comparing only the condition numbers of the linearizations would ignore the errors deriving from the QRP factorization; therefore, we chose to compare directly the forward errors on the eigenvalues. The same comparison is used in \cite{HigMT06}.

A set of reference values is obtained by using Matlab's variable precision arithmetic. Unfortunately, as of version R2010b, none of the customary methods for computing generalized eigenvalues (\texttt{polyeig}, \texttt{qz} and the two-argument version of \texttt{eig}) can deal with variable precision inputs; therefore, we have to settle for an inferior approach in computing the reference values. Namely, we compute them using \texttt{eig(G0/G1)}, where $G_0-xG_1$ is the $v=e_1$ linearization. Both inversion and eigenvalue computation are performed with the default (for VPA) value of $32$ significant digits. Incidentally, this is another reason to restrict our comparison to problems without infinite eigenvalues.

Then, using the two-argument \texttt{eig}, we compute the eigenvalues of the pencil obtained by linearization with all the four linearizations under examination, and compare them to the reference values using the angular distance ($d(v,w)=$\texttt{subspace([v;1],[w;1])}).

The benchmark problems used are chosen among the regular quadratic eigenvalue problems ($d=2$) in the collection NLEVP \cite{nlevp}, preprocessed with the Fan--Lin--Van Dooren scaling. As argued before, we restrict to those problems for which the condition numbers of $A_0$ and $A_2$ are smaller than $10^{10}$; moreover, the slowness of variable precision arithmetic makes it impractical to deal with large problems, therefore we only consider the problems with $n<150$. That leaves us with 17 problems, of which we have to further exclude problem \texttt{sign1} as the computation of the reference eigenvalues did not converge due to a VPA error. We report the angular errors for the remaining test problems in the figures.

\newcommand{\myfigurewithothertitle}[4]{
\begin{tikzpicture}
\begin{axis}[#3,ymode=log,xlabel={eigenvalue \#},title={\texttt{#2}}]
\pgfplotstableread{#1.dat}{\currenttable}
\pgfplotstablecreatecol[create col/expr={\pgfplotstablerow + 1}]{row number}{\currenttable}
 \addplot[color=blue,only marks,mark=x,mark size=0.3em] table[x=row number, y index=0]{\currenttable};\addlegendentry{$C(\lambda,\mu)$};
 \addplot[color=red,only marks,mark=+,mark size=0.3em] table[x=row number, y index=1]{\currenttable};\addlegendentry{$W(\lambda,\mu)$};
 \addplot[color=green,only marks,mark=asterisk,mark size=0.3em] table[x=row number, y index=2]{\currenttable};\addlegendentry{$v=e_1$}
 \addplot[color=brown,only marks,mark=star,mark size=0.3em] table[x=row number, y index=3]{\currenttable};\addlegendentry{$v=e_d$}
 \addplot[color=black,only marks,mark=o,mark size=0.3em] table[x=row number, y index=4]{\currenttable};\addlegendentry{[HMT]}
 #4
\end{axis}
\end{tikzpicture}
}
\newcommand{\myfigure}[3]{\myfigurewithothertitle{#1}{#1}{#2}{#3}}

\begin{figure}\centering
\myfigurewithothertitle{acoustic_wave_1d}{acoustic\_wave\_1d}{width=0.5\textwidth,legend style={at={(0,1)},anchor=north west}}{\legend{}}
\myfigure{bicycle}{width=0.5\textwidth,legend style={at={(0,1)},anchor=north west}}{}
\end{figure}

\begin{figure}\centering
\myfigurewithothertitle{gen_hyper2}{gen\_hyper2}{width=0.5\textwidth,legend style={at={(1,1)},anchor=north west}}{\legend{}}
\myfigurewithothertitle{metal_strip}{metal\_strip}{width=0.5\textwidth,legend style={at={(1,1)},anchor=north west}}{\legend{}}
\end{figure}
\begin{figure}\centering
\myfigurewithothertitle{power_plant}{power\_plant}{width=0.5\textwidth,legend style={at={(0,1)},anchor=north west}}{\legend{}}
\myfigure{qep2}{width=0.5\textwidth,legend style={at={(1,1)},anchor=north east}}{}
\end{figure}

\begin{figure}\centering
\myfigure{sleeper}{width=0.5\textwidth,legend style={at={(0,1)},anchor=north west}}{\legend{}}
\myfigure{spring}{width=0.5\textwidth,legend style={at={(0,1)},anchor=north west}}{\legend{}}
\end{figure}
\begin{figure}\centering
\myfigure{wing}{width=0.5\textwidth,legend style={at={(0,1)},anchor=north west}}{\legend{}}
\myfigure{wiresaw1}{width=0.5\textwidth,legend style={at={(1,1)},anchor=north west}}{\legend{}}
\end{figure}
\begin{figure}\centering
\myfigure{wiresaw2}{width=0.5\textwidth,legend style={at={(1,1)},anchor=north west}}{\legend{}}
\end{figure}
\begin{figure}\centering
\myfigurewithothertitle{acoustic_wave_2d}{acoustic\_wave\_2d}{width=\textwidth,height=0.48\textheight,legend style={at={(1,1)},anchor=north west}}{\legend{}}
\myfigurewithothertitle{cd_player}{cd\_player}{width=\textwidth,height=0.48\textheight,legend style={at={(0,1)},anchor=north west}}{}
\end{figure}
\begin{figure}\centering
\myfigure{dirac}{width=\textwidth,height=0.48\textheight,legend style={at={(1,0)},anchor=south east}}{\legend{}}
\myfigure{hospital}{width=\textwidth,height=0.48\textheight,legend style={at={(1,1)},anchor=north west}}{\legend{}}
\end{figure}
\begin{figure}\centering
\myfigure{sign2}{width=\textwidth,height=0.48\textheight,legend style={at={(1,0)},anchor=south east}}{\legend{}}
\end{figure}

The results are generally good, on par or better than the other alternatives. In problems \texttt{bicycle} and \texttt{metal\_strip}, the results are slightly inferior. On the other hand, in problems \texttt{power\_plant} and \texttt{wing} the accuracy of the new method is higher, and minor benefits on some eigenvalues can be identified in other problems. Moreover, notice that the strategy in [HMT] requires solving two separate eigenproblems, thus producing similar results with half of the computational work is an interesting result in itself.

We tested the new linearization $W(\lambda,\mu)$ on problems with several zero and/or infinite eigenvalues as well, without however obtaining good computational results. The reasons for this failure and the performance of this method in presence of zero and infinite eigenvalues are still under investigation, as it is not easy for us to identify the impact of the eigenvalue solver and of the various parts of the algorithm.

\section{Acknowledgements}
The author would like to thank D. Steven Mackey, Christian Mehl and Volker Mehrmann for several useful and illuminating discussions on linearizations. A proof of the relation \eqref{mathov} was suggested by Noam D.~Elkies on the excellent website MathOverflow, in response to an inquiry of the author \cite{mathov}.

\bibliographystyle{plain}
\bibliography{qrlin}

\end{document}